\def \r{\mathbb R}
\def \r{\mbox{${\mathbb R}$}}
\def \N{\mbox{$\mathcal{N}$}}
\newtheorem{theorem}{Theorem}[section]
\newtheorem*{theorem*}{Theorem}
\newtheorem*{lemma*}{Lemma}
\newenvironment{taggedtheorem}[1]
 {\taggedtheoremx}
 {\endtaggedtheoremx}
\theoremstyle{definition}
\newtheorem{example}[theorem]{Example}
\newtheorem{remark}[theorem]{Remark}
\numberwithin{equation}{section}
\numberwithin{figure}{section}
\begin{document}

\title[The Bour's theorem for invariant surfaces]{The Bour's theorem for invariant surfaces\\ in three-manifolds}

\author[I. Domingos]{Iury Domingos}

\address{KU Leuven\\
	Department of Mathematics\\
	Celestijnenlaan 200B -- Box 2400, 3001  Leuven, Belgium}

\email{iury.domingos@im.ufal.br}

\author[I. I. Onnis]{Irene I. Onnis}

\address{Universit\`a degli Studi di Cagliari\\
Dipartimento di Matematica e Informatica\\
Via Ospedale 72, 09124 Cagliari, Italy}

\email{irene.onnis@unica.it}

\author[P. Piu]{Paola Piu}
\address{Universit\`a degli Studi di Cagliari\\
Dipartimento di Matematica e Informatica\\
Via Ospedale 72, 09124 Cagliari, Italy}

\email{piu@unica.it}

\thanks{I.I. Onnis and P. Piu were supported by Fondazione di Sardegna under Project GoAct CUP F75F21001210007 and partially funded by PNRR e.INS Ecosystem of Innovation for Next Generation Sardinia (CUP F53C22000430001, codice MUR ECS00000038). I. Domingos was supported by the Research Foundation-Flanders (FWO) and the Fonds de la Recherche Scientifique (FNRS) under EOS Project G0H4518N}

\keywords{Helicoidal surfaces,  Invariant surfaces,  Bour's theorem.}

\subjclass{53C42,  53C40}

\begin{abstract}
 In this paper, we apply techniques from equivariant geometry to prove that a generalized Bour's theorem holds for surfaces that are invariant under the action of a one-parameter group of isometries of a three-dimensional Riemannian manifold.
\end{abstract}

\maketitle

\section{Introduction}

The starting point of this work is a classical result by Edmond Bour concerning helicoidal surfaces in Euclidean space $\r^3$ (see \cite[Theorem~II, p. 82]{Bour-1863}).  In 1862, Bour demonstrated the existence of a two-parameter family of helicoidal surfaces that are isometric to a given helicoidal surface in $\mathbb{R}^3$. For this, firstly he obtained orthogonal parameters $(s,t)$ on a helicoidal surface $M$ for which the families of $s$-coordinate curves are geodesics on $M$, parametrized by arc length, and the $t$-coordinate curves are the trajectories of the helicoidal motion. Such parameters are called {\it natural parameters} and the first fundamental form with respect to them can be written as $\dif\sigma^2=\dif s^2+U(s)^2\dif t^2$. On the contrary, given the natural parameters $(s,t)$ on $M$  and the function $U(s)$, Bour determined a two-parameter family of isometric helicoidal surfaces that have induced metric given by  $\dif\sigma^2=\dif s^2+U(s)^2\dif t^2$.

By using the Bour's theorem, do Carmo and Dajczer established in \cite{doCarmo-Dajczer-82} a condition for a helicoidal surface that belongs to the Bour's family to have constant mean curvature. Also they obtained an integral representation (depending on three parameters) of helicoidal surfaces with nonzero constant mean curvature (CMC), that is a natural generalization of the representation for Delaunay surfaces, i.e. CMC rotation surfaces, given by Kenmotsu in \cite{Kenmotsu-1980}.

In other three-dimensional Riemannian manifolds, some versions of the Bour's theorem were established in the last decades. In space forms, this result is due to Ordóñes \cite{Ordones-1995} and, as a consequence,  all CMC helicoidal surfaces in these spaces were classified, via its profile curves. In \cite{Ikawa-01v2}, Ikawa proved this result for helicoidal surfaces in Minkowski space $\mathbb{R}^3_1$ with rotation axis being spacelike, timelike or null; Ji and Kim proved that it holds for cubic screw motion surfaces in $\mathbb{R}^3_1$, i.e., for surfaces invariant by a non-trivial one-parameter families of translations together with a Lorentzian rotation around a null axis (cf. \cite{Ji-Kim-2010}, see also \cite{Dillen-Kuhnel-99}).

In \cite{SaEarp-08,saearp2005}, the authors generalized the Bour's theorem for $\mathbb{S}^2\times\mathbb{R}$, $\mathbb{H}^2\times\mathbb{R}$ and the Heisenberg space. In the case of $\mathbb{H}^2\times\mathbb{R}$, it has been shown that it holds in three different scenarios, wherein the surface is invariant by a one-parameter group of isometries, referred as either standard, or parabolic, or hyperbolic, screw motion surfaces, respectively (indeed, the study of the invariant surfaces in $\mathbb{H}^2\times\mathbb{R}$ can be reduced to these three types, up to congruences, as has been proved by the second author in \cite[Proposition~2]{Onnis-2008}). As an application of these results, a two-parameter family of complete embedded, simply connected, minimal and CMC surfaces in $\mathbb{H}^2\times\mathbb{R}$ was found.

In \cite{Caddeo-Onnis-Piu-2022}, the second and the third authors generalize the Bour's theorem for helicoidal Bianchi-Cartan-Vranceanu (BCV) spaces, i.e. in the Riemannian $3$-manifolds whose metrics have groups of isometries of dimension $4$ or $6$, except the hyperbolic one. Using a common setting for the BCV spaces, they proved that there exists a two-parameter family of helicoidal surfaces isometric to a given helicoidal surface; moreover, by making use of this two-parameter representation, they characterized the helicoidal surfaces which have constant mean curvature, including the minimal ones. These results generalize the ones proved by do Carmo and Dajczer in \cite{doCarmo-Dajczer-82}.

In a broader sense, the surfaces which are invariant under the action of a one-parameter subgroup of isometries of the ambient space constitute an important geometric class of surfaces. The foundations of equivariant geometry, which refers to the study of interactions between transformation groups and Riemannian geometry, has been initiated by Back, do Carmo, and Hsiang in a work from the 1980s. However, the publication of their work was delayed until 2009 (cf. \cite{Back-doCarmo-Hsiang-2009}).

The invariant surfaces have also been classified by Gaussian or mean curvature in many remarkable three-dimensional spaces. By using techniques from equivariant geometry,
Gomes studied the CMC spherical surfaces in the hyperbolic 3-space in \cite{Gomes-1987}. In the BCV spaces, Caddeo, Piu and Ratto in \cite{Caddeo-Piu-Ratto-95} characterized the $\mathrm{SO}(2)$-invariant CMC surfaces, and later in \cite{Caddeo-Piu-Ratto-1996}, they classified rotational surfaces in the Heisenberg space with constant Gaussian curvature. A complete description of CMC invariant surfaces in the Heisenberg space and in $\mathbb{H}^2\times\mathbb{R}$ were established by Figueroa, Mercuri and Pedrosa in \cite{Figueroa-Mercuri-Pedrosa-99}, and by Montaldo the second author in \cite{Montaldo-Onnis-2004,Onnis-2008}, respectively. Furthermore, in an arbitrary three-dimensional manifold, invariant surfaces were also studied in \cite{Mercuri-Montaldo-Onnis-2011,Montaldo-Onnis-2005}; and in \cite{Montaldo-Onnis-2009,Montaldo-Onnis-2011,Piu-Profir-2011} have been studied two well-known types of curves on invariant surfaces: the geodesics and the proper biharmonic curves.

The aim of this work is to prove that the Bour's theorem holds for all surfaces which are invariant under the action of a one-parameter group of isometries of the ambient space, namely:

\begin{theorem*}
 [Generalized Bour's theorem] Let $(N,g)$ be a three-dimensional connected manifold Riemannian, which admits a Killing vector field $X$, and let $M$ be a $G_X$-invariant surface of $N$. Then there are local natural parameters $(s,t)$ on $M$ such that the induced metric is given by
\[\dif\sigma^2=\dif s^2+U(s)^2\dif t^2,\]
where $U(s)$ is a positive smooth function. Furthermore, there exists a one-parameter family of $G_X$-invariant surfaces of $N$ isometric to $M$, such that each surface in this family can be explicitly determined in terms of a parameter $m$ and the metric $\dif \sigma^2$, up to an integration procedure.
\end{theorem*}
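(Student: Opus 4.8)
The plan is to reduce the whole problem to the local orbit space of the one-parameter group $G_X$ and to regard a $G_X$-invariant surface simply as a curve in this two-dimensional quotient.

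First I would parametrize $M$ equivariantly. Let $\phi_t$ be the flow of $X$, and work on an open set where $X\neq 0$, so that the orbits are regular curves. Choosing a curve $\gamma$ on $M$ and setting $\psi(s,t)=\phi_t(\gamma(s))$, one has $\psi_t=X$ and $\psi_s=\dif\phi_t(\gamma'(s))$. Because $\phi_t$ is an isometry with $(\phi_t)_*X=X$, and because $X$ Killing forces $X|X|^2=2g(\nabla_XX,X)=0$, the coefficients $E=g(\psi_s,\psi_s)$, $F=g(\psi_s,\psi_t)$ and $G=g(\psi_t,\psi_t)=|X|^2$ of the first fundamental form are all functions of $s$ alone. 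I would then take $\gamma$ to be an integral curve of the (always integrable) line field on $M$ orthogonal to $X$, parametrized by arc length. This forces $F\equiv 0$ and $E\equiv 1$, so that $\dif\sigma^2=\dif s^2+U(s)^2\dif t^2$ with $U(s)=|X|_{\gamma(s)}>0$ smooth. A direct check of the Christoffel symbols of this diagonal metric gives $\Gamma^s_{ss}=\Gamma^t_{ss}=0$, so the $s$-curves are unit-speed geodesics while the $t$-curves are the orbits; hence $(s,t)$ are natural parameters, which settles the first assertion.

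For the second assertion I would pass to the quotient. Near a regular point, choose coordinates $(\xi_1,\xi_2,t)$ on $N$ with $X=\partial_t$; since $X$ is Killing, $g=\sum_{i,j}a_{ij}(\xi)\,\dif\xi_i\dif\xi_j+2\sum_i b_i(\xi)\,\dif\xi_i\,\dif t+c(\xi)\,\dif t^2$ with $c=|X|^2$ and all coefficients independent of $t$. Completing the square exhibits the horizontal metric $h_{ij}=a_{ij}-b_ib_j/c$ on the orbit space $B$, the local two-manifold parametrized by $\xi=(\xi_1,\xi_2)$. A $G_X$-invariant surface is exactly the set swept by a profile curve $\xi(s)$ in $B$, and Step~1 applied to such a surface shows its induced metric in natural parameters is $\dif s^2+c(\xi(s))\,\dif t^2$, where $s$ is the $h$-arc length of $\xi$ (horizontal lengths equal base lengths). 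Producing a $G_X$-invariant surface isometric to $M$ is therefore equivalent to finding a unit-speed curve $\xi(s)$ in $(B,h)$ satisfying the single scalar condition $c(\xi(s))=U(s)^2$. I would solve this by a reflection argument: differentiating the constraint gives $\langle\grad_h c,\xi'\rangle_h=(U^2)'(s)$, while unit speed gives $|\xi'|_h=1$. In the plane $T_\xi B$ these two relations determine $\xi'$ up to reflection across the direction of $\grad_h c$, namely $\xi'=\alpha\,\grad_h c/|\grad_h c|_h+\beta\,J\bigl(\grad_h c\bigr)/|\grad_h c|_h$ with $J$ the rotation by a right angle, $\alpha=(U^2)'/|\grad_h c|_h$ and $\beta=\pm\sqrt{1-\alpha^2}$. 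Fixing a continuous branch of the sign yields a first-order ODE $\xi'=w(\xi,s)$ whose right-hand side is, by construction, a unit vector with the prescribed inner product against $\grad_h c$; its solutions then automatically keep $|\xi'|_h=1$ and, once $c(\xi(s_0))=U(s_0)^2$, keep $c(\xi(s))=U(s)^2$ for all $s$. Hence the solution is determined by the initial point $\xi(s_0)$, constrained only to lie on the level curve $\{c=U(s_0)^2\}$. Letting $m$ parametrize this level curve gives the one-parameter family of $G_X$-invariant surfaces isometric to $M$, each recovered from $U$ and $m$ by integrating the ODE, which is the announced integration procedure.

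The main obstacle I anticipate is controlling the degenerate loci while keeping the two constraints compatible. The reflection step requires $\grad_h c\neq 0$ (so that the level sets of $c$ are genuine curves and $m$ is a true parameter) and $|(U^2)'|\le|\grad_h c|_h$ (so that $\beta$ is real); both hold along $M$ itself by the computation above and persist nearby by continuity, but the argument breaks down where $c$ is critical or locally constant, cases that must be excluded or handled separately. The remaining work is bookkeeping: verifying that building unit speed directly into $w$ is precisely what turns the arc-length normalization into a consequence rather than an independent equation, and that the resulting $\xi(s)$ reproduces $\dif\sigma^2$ on the nose.
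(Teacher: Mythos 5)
Your first half is correct and is essentially the paper's Lemma by a shortcut: where the paper completes the square in $g_\psi=E\dif u^2+2F\dif u\dif v+G\dif v^2$ and substitutes $s=\int\sqrt{E-F^2/G}\,\dif u$, $t=v+\int (F/G)\dif u$, you choose the lift $\gamma$ unit-speed and orthogonal to $X$ from the start; both give $\dif\sigma^2=\dif s^2+\omega(s)^2\dif t^2$ with $\omega=\|X\|_g$ along the profile, and your observation that the orthogonal curve is horizontal, so that $s$ is also the $h$-arc length of the projected profile, is sound.

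The gap is in the second half, at the sentence claiming that producing a $G_X$-invariant surface isometric to $M$ ``is equivalent to'' finding a unit-speed curve $\xi(s)$ in the base with $c(\xi(s))=U(s)^2$. This is only sufficient, and imposing it discards exactly the freedom that Bour's theorem is about: since the orbital parameter may be rescaled linearly, a metric $\dif s^2+\omega(s)^2\dif t^2$ is isometric to $\dif s^2+U(s)^2\dif t^2$ whenever $\omega(s)=\pm m\,U(s)$ for any constant $m\neq 0$ (substitute $t\mapsto t/m$). In the paper this constant is not an ansatz: writing the isometry conditions as a system for $v(s,t)$ and equating mixed partials, $\partial^2 v/\partial t\partial s=0$ forces $\pm U(s)/\omega(s)=1/m$, and this $m$ is precisely the parameter of the Bour family, with $v(s,t)=\frac{1}{m}\int\dif t-\int g(X(\gamma(s)),\gamma'(s))/(m^2U(s)^2)\,\dif s$. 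Your substitute family --- fixing $m=1$ and varying the initial point on the level curve $\{c=U(s_0)^2\}$ --- does yield surfaces isometric to $M$ (your propagation-of-constraint argument for the ODE $\xi'=w(\xi,s)$ is fine where $\grad_h c\neq 0$ and $|(U^2)'|<|\grad_h c|_h$), but it is generically trivial: in the motivating examples the level sets of $c$ are orbits of ambient isometries commuting with $G_X$ (rotations about the axis, for helicoidal motions in $\mathbb{R}^3$ and in the BCV spaces), and the data $(h,c)$ of your ODE are invariant under them, so shifting the initial point along the level curve merely produces congruent, rotated copies of $M$. In particular, when $M$ is the helicoid your family never contains the catenoid, so the essential content of the theorem --- the surfaces ``determined in terms of a parameter $m$'' --- is missed. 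The fix is to restore the rescaling and impose $c(\xi(s))=m^2U(s)^2$; with that correction, your reflection/ODE technique becomes a perfectly serviceable coordinate-free version of the paper's integration procedure, which instead builds orthogonal coordinates $(\omega,\theta)$ on the orbit space (with $\theta$ a solution of a first-order linear PDE orthogonal to $\omega$) and reduces the unit-speed condition to a scalar ODE for $\theta(s)$.
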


This work is organized as follows: In Section~\ref{sec:preliminaries}, we fix some notations and we recall previous results about the theory of Riemannian actions, in particular the Principal orbit theorem. We introduce the equivariant immersions, by using standard techniques of equivariant geometry developed by Back, do Carmo and Hsiang in \cite{Back-doCarmo-Hsiang-2009}. Also, in the case of surfaces invariant by a one-parameter subgroup of isometries generated by a Killing vector field, we give a local description of these immersions in terms of the flow of the Killing vector field.

In Section~\ref{sec:natural-parameters}, we construct the natural parameters on an invariant surface in a general Riemannian three-manifold, generalizing the classical ones constructed by Bour for the helicoidal surfaces in $\mathbb{R}^3$. By using these parameters, we are able to show the existence of the Bour's family in the Theorem~\ref{Generalized-Bour}.

In Section~\ref{sec:integration-procedure}, we use the Frobenius theorem to construct orthogonal coordinates on the orbit space, associated to the action of Killing vector field. We develop an integration procedure that allows to find explicitly the surfaces of the Bour's family in Theorem~\ref{Generalized-Bour-2}. Finally, by using this method, we obtain the classical Bour's theorem for helicoidal surfaces in $\mathbb{R}^3$ and in the BCV spaces, as presented in \cite{Bour-1863,doCarmo-Dajczer-82} and \cite{Caddeo-Onnis-Piu-2022}, respectively.

\subsection*{Acknowledgements} I. Domingos would like to thank the hospitality of the Mathematics Department of the Universit\`a degli Studi di Cagliari, where part of this work was carried out.

\section{Preliminaries}\label{sec:preliminaries}

Let $(N,g)$ be a connected Riemannian manifold and let $X$
be a Killing vector field on ${N}$. Then $X$ generates a
one-parameter subgroup $G_X$ of the group of isometries of
$(N,g)$. From the theory of Riemannian actions (we may refer to \cite{Bredon-72} for more details) we know that:
\begin{itemize}
  \item The isotropy subgroup $G_p$, $p\in N$, is compact and the orbit $G(p)$ is diffeomorphic to the quotient space $G_X/G_p$. We say that $G(q)$ is of {\it the same type} as $G(p)$ if the isotropy subgroups $G_q$ and $G_p$ are conjugated.
  \item  An orbit $G(p)$ is called {\it principal} if there exists an open neighborhood  $U\subset N$ of $p$
such that all orbits $G(q)$, $q\in U$, are of the same type as $G(p)$. This implies that $G(q)$  is
diffeomorphic to $G(p)$. We denote with $N_r$ the {\it regular part} of $N$, that is, the subset consisting of points belonging to
principal orbits.
\item An orbit $G(p)$ is said to be {\it exceptional} if the dimension of $G(p)$ coincides with the dimension of principal orbits but $G(p)$ is not a principal orbit.
  \item An orbit $G(p)$ is said to be {\it singular} if  has dimension less than the dimension of the principal orbits.
\end{itemize}

The following theorem (cf. \cite[Theorem~3.1, p.179]{Bredon-72}) is a very important result for the study of the orbit space of a group action:
 \begin{theorem*}
 [Principal orbit theorem]  Let G be a compact Lie group acting isometrically on a Riemannian manifold $N$. The following hold:
 \begin{enumerate}
\item All principal orbits are diffeomorphic,
\item $N_r$ is open and dense in $N$,
\item the regular part of the orbit space $N_r/G_X$ is a connected differentiable manifold and the quotient map
$\pi:N_r\to N_r/G_X$ is a submersion.
\end{enumerate}
\end{theorem*}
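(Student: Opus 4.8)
The backbone of the whole statement is the equivariant tubular neighbourhood theorem, or \emph{Slice Theorem}, which I would set up first. Averaging an arbitrary Riemannian metric over the compact group $G$ yields a $G$-invariant metric, and applying the normal exponential map to the orbit $G(p)$ produces, for each $p\in N$, a $G_p$-invariant \emph{slice} $S_p$ transverse to $G(p)$ together with an equivariant diffeomorphism of the twisted product $G\times_{G_p}S_p$ onto a $G$-invariant tubular neighbourhood of $G(p)$. On this model the action of $G_p$ on $S_p$ is linear, given by the isotropy representation on the normal space of the orbit, and for every $q\in S_p$ one has $G_q=(G_p)_q\subseteq G_p$. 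This inclusion, asserting that isotropy groups can only shrink as one moves along a slice, is the single structural fact I would use throughout.

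Next I would analyse a principal orbit $G(p)$. Principality forces all orbits near $G(p)$ to have isotropy conjugate to $G_p$; combined with $G_q\subseteq G_p$ and the compactness of these groups, this gives $G_q=G_p$ for every $q$ in a neighbourhood of $p$ inside $S_p$. Hence $G_p$ fixes the slice pointwise near $p$, the slice representation is trivial, and the tube splits as a product $(G/G_p)\times S_p$ on which every orbit is diffeomorphic to $G/G_p$. Two consequences are immediate: the set of points whose orbit has a fixed principal type is open, and nearby principal orbits are mutually diffeomorphic. To turn this local information into the global assertion of part (1) it suffices to prove that there is a \emph{unique} principal (equivalently, minimal) isotropy type, and this in turn I would reduce to showing that the set of points realising a minimal isotropy type is dense in $N$.

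That density statement is the analytic heart of the argument and the step I expect to be the main obstacle. I would prove it by induction on $\dim N$: inside a tube $G\times_{G_p}S_p$ it is enough to produce principal orbits densely, and by the inclusion $G_q\subseteq G_p$ these arise exactly from the principal orbits of the linear $G_p$-action on the slice $S_p$. This reduces matters to the case of an orthogonal representation of a compact group on a disc, where the non-principal points lie in the union of the proper fixed-point subspaces of the finitely many relevant isotropy subgroups; that union is nowhere dense, so principal points are dense in the slice and hence in the tube. Granting density of the minimal type, part (1) follows: any principal type determines an open set which must meet the dense minimal-type set, forcing the two types to coincide, so all principal orbits are diffeomorphic to a single homogeneous space $G/H$. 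Openness and density of $N_r$ in part (2) are then exactly the local splitting together with the density just established.

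Finally, for part (3) I would use that, by (1) and (2), the action on $N_r$ has the single orbit type $(H)$ with trivial slice representation, so each tube is modelled on $(G/H)\times S$ and the restriction of $\pi$ reads as the projection $(G/H)\times S\to S$ onto the slice. This is manifestly a submersion and exhibits $\pi$ locally as a fibre bundle with fibre $G/H$; the slices $S$ serve as manifold charts for $N_r/G_X$, and I would check that the transition maps are smooth by the uniqueness of the differentiable slice, so that these charts assemble into a differentiable structure making $\pi$ a smooth submersion. Connectedness of $N_r/G_X$ is the last delicate point: since $N_r$ itself need not be connected, I would argue on the orbit space, using that the orbit space of each linear slice representation is a cone whose regular part is connected, and gluing this local connectivity over the connected space $N/G_X$ to conclude that the regular part $N_r/G_X$ is connected.
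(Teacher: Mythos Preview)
The paper does not prove this statement at all: it is quoted as background and attributed to Bredon \cite[Theorem~3.1, p.~179]{Bredon-72}, with no argument given. Your proof idea is therefore not competing with any proof in the paper; rather, you have reconstructed the standard proof via the Slice Theorem, which is precisely the route taken in Bredon and in most treatments of compact transformation groups. The outline is sound: the equivariant tubular neighbourhood, the inclusion $G_q\subseteq G_p$ along a slice, triviality of the slice representation at a principal point, and the inductive density argument are all the expected ingredients, and your handling of the quotient manifold structure and of connectedness of $N_r/G$ is correct in spirit. For the purposes of this paper nothing more than the citation is required.
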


From now on, we suppose that  $(N,g)$ a three-dimensional connected Riemannian manifold. Let $f : M \to (N,g)$ be an immersion from a
surface $M$ into $N$ and assume that $f({M})\subset {N}_r$
(the regular part of $N$,  that is, the subset consisting of
points belonging to principal orbits). We say that $f$ is a $G_X$-{\it
equivariant immersion}, and $f({M})$ a $G_X$-{\it invariant surface}
of ${N}$, if there exists an action of $G_X$ on ${M}$ such that
for any $x\in {M}$ and $a\in G_X$ we have $f(a\,x)=a f(x)$.
A $G_X$-equivariant immersion $f:M\to (N,g)$ induces on ${M}$ a Rieman\-nian metric,
the pull-back metric, denoted by $g_f$ and called the $G_X$-{\it invariant induced metric}.

Consider $f:M\to (N,g)$ a $G_X$-equivariant immersion and let $g_f$ be the
 $G_X$-invariant induced metric on $M$.
Assume
that ${N}/G_X$ is connected. Then, $f$ induces an
immersion $\tilde{f}:{M}/G_X\rightarrow {N}_r/G_X$ between the orbit
spaces and, moreover, ${N}_r/G_X$ can be equipped with a
Riemannian metric, the {\it quotient metric}, so that the quotient
map $\pi:{N}_r\to {N}_r/G_X$ becomes  a Riemannian submersion.
Thus we have the following diagram
\[\begin{CD}
(M,g_f) @>f>> ({N}_{r},g)\\
@V VV @V\pi VV\\
M/G_X @>\tilde{f}>> ({N}_{r}/G_X,\tilde{g}).
\end{CD}\]

For later use, we describe the quotient metric of the regular part of the orbit space
$N_r/G_X$. It is well-known that
$N_r/G_X$ can be locally parametrized by invariant functions of
the Killing vector field $X$. In addition, if $\{\xi_1,\xi_2\}$ is a complete set of
invariant functions on a $G_X$-invariant subset  of $N_r$, then the
quotient metric is given by  $\tilde{g}=\sum_{i,j=1}^{2} h^{ij}
\dif\xi_i\otimes \dif\xi_j$, where $(h^{ij})$ is the inverse of the matrix
$(h_{ij})$ with entries $h_{ij}=g(\nabla \xi_i,\nabla \xi_j)$ and $\nabla \xi_i$ is the gradient vector field of $\xi_i$ on $N$ (we may refer to \cite{Hsiang-85,Olver-93} for more details).

Let us now provide a local description of the $G_X$-invariant surfaces of
$N$. Let $\tilde{\gamma}:(a,b)\subset\r\to(N/G_X,\tilde{g})$
be a curve and
$\gamma:(a,b)\subset\r\to N$ be a lift of $\tilde{\gamma}$, such
that $\dif\pi(\gamma')=\tilde{\gamma}'$. If we denote by
$\phi_v,\;v\in(-\epsilon,\epsilon)$, the local flow of the Killing
vector field $X$, then the map
\begin{equation}\label{eq-psi}
\psi:(a,b)\times(-\epsilon,\epsilon)\to N\,,\quad \psi(u,v)=\phi_v(\gamma(u)),
\end{equation}
defines a parametrization of a
$G_X$-invariant surface. Conversely, if $f({M})$ is a $G_X$-invariant immersed surface in $N$,
then $\tilde{f}$ defines a curve $\tilde{\gamma}$ in $(N/G_X,\tilde{g})$, generally called the {\it profile
curve} of the invariant surface.

As the $v$-coordinate curves are the orbits of the
action of the one-parameter group of isometries $G_X$, it results that the
coefficients of the pull-back metric $g_\psi=E\dif u^2 +2 F\dif u \dif v + G\dif v^2$ are functions only of $u$ and are given by:
$$
\left\{\begin{aligned}
E(u)&=g(\psi_u,\psi_u)=g(\dif\phi_v(\gamma'(u)),\dif\phi_v(\gamma'(u)))\\&=g(\gamma'(u),\gamma'(u)),\\
F(u)&=g(\psi_u,\psi_v)=g(\dif\phi_v(\gamma'(u)),X(\gamma(u)))\\&=g(\gamma'(u),X(\gamma(u))),\\
G(u)&=g(\psi_v,\psi_v)\\&=g(X(\gamma(u)),X(\gamma(u))).
\end{aligned}
\right.
$$
We observe also that the volume function  $\omega:=\|X(\gamma)\|_g$ (of the principal orbit) is a $X$-invariant function and, therefore, $\omega=\omega(\xi_1,\xi_2)$.

\section{The natural parameters of an invariant surface}\label{sec:natural-parameters}

In order to prove the generalized Bour's theorem, we begin by constructing orthogonal parameters $(s,t)$ on an invariant surface for which the families of $s$-coordinate curves are geodesics on $M$ parametrized by arc length, and the $t$-coordinate curves are the orbits of the Killing vector field $X$.

\begin{lemma*}
Let $(N,g)$ be a three-dimensional connected manifold Riemannian which admits a Killing vector field $X$. Then any $G_X$-invariant surface $M$ of $N$ with profile curve $\tilde{\gamma}$ can be locally parametrized by natural parameters $(s,t)$, so that its first fundamental form is given by
$$g_\psi=\dif s^2+ \omega(s)^2\dif t^2,$$
where $\omega=\|X(\gamma)\|_g$ is the volume function (of the principal orbit) restricted to a lift $\gamma$ of $\tilde{\gamma}$ with respect to $\pi$.
\end{lemma*}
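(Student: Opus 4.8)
The plan is to begin from the local parametrization $\psi(u,v)=\phi_v(\gamma(u))$ of Section~\ref{sec:preliminaries}, whose pull-back metric
\[g_\psi = E(u)\,\dif u^2 + 2F(u)\,\dif u\,\dif v + G(u)\,\dif v^2\]
has coefficients depending only on $u$. Working on the regular part $N_r$, every orbit through $M$ is principal, so $G = g(X(\gamma),X(\gamma)) = \omega^2 > 0$; and since $g$ is Riemannian while $\psi$ is an immersion, the induced metric is positive definite, whence $EG - F^2 > 0$. The strategy is to change coordinates $(u,v)\mapsto(s,t)$ so that the orbits are preserved as coordinate curves while the transverse family becomes orthogonal to them and arc-length parametrized. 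Since $E,F,G$ are functions of $u$ alone, this amounts to a $u$-dependent shear in $v$ (to kill the cross term) followed by a one-variable reparametrization of $u$ (to normalize the transverse direction).

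Concretely, I would define
\[t = v + \int_{u_0}^{u}\frac{F}{G}\,\dif u,\qquad s = \int_{u_0}^{u}\sqrt{\frac{EG-F^2}{G}}\,\dif u.\]
Because $s'(u)=\sqrt{(EG-F^2)/G}>0$, the assignment $u\mapsto s$ is a local diffeomorphism, and the map $(u,v)\mapsto(s,t)$ is a diffeomorphism as its Jacobian is triangular with nonvanishing diagonal. Inverting $s=s(u)$ to get $u=u(s)$ lets the volume function $\omega=\sqrt{G}$ be regarded as a smooth positive function $\omega(s)$.

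To verify the form of the metric, I would substitute the shear relation $\dif v = \dif t - (F/G)\,\dif u$ into $g_\psi$; the cross term cancels and one is left with
\[g_\psi = \frac{EG-F^2}{G}\,\dif u^2 + G\,\dif t^2.\]
The reparametrization $\dif u = \dif s/s'$ with $s'^2 = (EG-F^2)/G$ then converts the first summand into $\dif s^2$, while the second equals $G=\omega^2$, giving $g_\psi = \dif s^2 + \omega(s)^2\,\dif t^2$, as claimed.

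Finally, I would check that $(s,t)$ are natural parameters. The $t$-coordinate curves are $\{s=\text{const}\}=\{u=\text{const}\}$, along which varying $t$ means varying $v$; these are exactly the orbits of $X$. For the $s$-coordinate curves $\{t=\text{const}\}$, the metric $\dif s^2+\omega(s)^2\,\dif t^2$ gives unit speed in the $s$-direction, so $s$ is an arc-length parameter, and the only Christoffel symbol entering the $s$-equation of the geodesic system vanishes because $g_{ss}\equiv 1$; hence these curves satisfy $\ddot s = 0$ and are geodesics. I expect the only delicate point to be the careful justification that the coordinate change is a well-defined local diffeomorphism, which rests entirely on the inequalities $G>0$ and $EG-F^2>0$ that hold on $N_r$; everything else is the completing-the-square computation sketched above.
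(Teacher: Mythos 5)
Your proposal is correct and takes essentially the same route as the paper: the identical shear $t=v+\int (F/G)\,\dif u$ and arc-length reparametrization $s=\int\sqrt{(EG-F^2)/G}\,\dif u$ (note $\sqrt{(EG-F^2)/G}=\sqrt{E-F^2/G}$), with the same completing-the-square computation; the paper differs only in that it additionally identifies $E-F^2/G=\tilde{g}(\tilde{\gamma}',\tilde{\gamma}')$ via the Riemannian submersion (an identity it needs later for Theorem A) and cites a known proposition for the geodesic property of the $s$-curves where you compute directly. In your geodesic check you should, strictly speaking, also verify the $t$-component of the geodesic system, i.e.\ $\Gamma^t_{ss}=0$, but this is immediate from $g_{st}=0$ and $\partial_t g_{ss}=0$, so there is no genuine gap.
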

\begin{proof}
We suppose that $M$ is locally parametrized by \eqref{eq-psi}.
Then,  the induced metric is given by
\begin{equation}\label{metric}
\begin{aligned} g_\psi&=E(u)\dif u^2+2F(u)\dif u\dif v+G(u)\dif v^2\\
&=\bigg(E(u)-\frac{F(u)^2}{G(u)}\bigg)\dif u^2+ G(u)\bigg(\dif v+\frac{F(u)}{G(u)}\dif u\bigg)^2,
\end{aligned}
\end{equation}
where $G(u)=\omega(u)^2$ and $\omega(u)=\|X(\gamma(u))\|_g$ is the volume function.

As $\gamma$ is a lift of $\tilde{\gamma}$ with respect to the Riemannian submersion $\pi$, we have that $\dif\pi(\psi_u)=\tilde{\gamma}'$ and
$\dif\pi(\psi_v)=0$. Let  $e$ be a local unit vector field tangent to the surface and horizontal with respect to $\pi$, then $\psi_u$ can be decomposed as
\[
\psi_u= g(\psi_u,X)\frac{X}{g(X,X)}+g(\psi_u,e)\,e = \frac{F(u)}{G(u)}\,X+g(\psi_u,e)\,e.
\]
Therefore,  \[E(u)=g(\psi_u,\psi_u)=g(\psi_u,e)^2+\frac{F(u)^2}{G(u)}.\]
Also, since $\dif\pi(\psi_u)=\tilde{\gamma}'$ and $\pi$ is a Riemannian submersion,  we have that
\begin{equation}\label{pc}
\begin{aligned}
\tilde{g}(\tilde{\gamma}',\tilde{\gamma}')&=g(\psi_u,e)^2\,\tilde{g}(d\pi(e),d\pi(e))=g(\psi_u,e)^2\,g(e,e)\\&=g(\psi_u,e)^2=E(u)-\frac{F(u)^2}{G(u)}.
\end{aligned}
\end{equation}
We introduce new parameters $(s, t)$ as follows:
\begin{equation*}\label{parameters}
\left\{
\begin{aligned}
s&=\int \sqrt{E(u)-\frac{F(u)^2}{G(u)}}\dif u,\\
t&=v+\int \frac{F(u)}{G(u)}\dif u.
\end{aligned}
\right.
\end{equation*}
Since the Jacobian $|\partial (s,t)/\partial(u,v)|$ is non-zero, then $(s,t)$ are local coordinates on $M$ and also we can write \eqref{metric} as
\begin{equation*}\label{metric1}
g_\psi=\dif s^2+\omega(s)^2\dif t^2,
\end{equation*}
where $\omega(s):=\omega(u(s))$ and $u(s)$ in the inverse of the arc length function of $\tilde{\gamma}(u)$.
We now observe that the $s$-coordinate curves,
given by
\[\psi(u(s),v(s,t_0))=\psi\Big(u(s),t_0-\int  \frac{F(u(s))\,u'(s)}{G(u(s))}\dif s\Big) \ \ \text{for} \ \ t_0\in\r,\]
are parametrized by arc length and also are orthogonal to the $t$-coordinate curves, i.e.  the orbits of the Killing vector field $X$.
Therefore these curves
are geodesics of $M$ (cf. \cite[Proposition~2.3]{Montaldo-Onnis-2011}) and consequently the local parametrization $\psi(u(s),v(s,t))$ is a {\it natural} parametrization of the invariant surface $M$.
\end{proof}

By using the above lemma, we can to state the first part of our main result:

\begin{taggedtheorem}{A}\label{Generalized-Bour}
Let $(N,g)$ be a three-dimensional connected manifold Riemannian, which admits a Killing vector field $X$, and let $M$ be a $G_X$-invariant surface of $N$ locally parametrized by natural parameters $(s,t)$ such that the metric is given by
\[\dif\sigma^2=\dif s^2+U(s)^2\dif t^2,\]
where $U(s)$ is a positive smooth function.
Then there exists a one-parameter family of $G_X$-invariant surfaces of $N$ isometric to $M$.
\end{taggedtheorem}

\begin{proof}
From \eqref{metric} and \eqref{pc},  the induced metric of a $G_X$-invariant surface  $\psi(u,v)=\phi_v(\gamma(u))$ with profile curve $\tilde{\gamma}(u)=\pi(\gamma(u))$ can be written as
\begin{equation*}\label{metric2}
g_\psi=||\tilde{\gamma}'(u)||_{\tilde{g}}^2\,\dif u^2+\omega(u)^2\,\Bigg[\dif v+\frac{g(X(\gamma(u)),\gamma'(u))}{\omega(u)^2}\, \dif u\Bigg]^2.
\end{equation*}
Therefore, we want to determine functions $u, v$ of $(s,t)$ such that
\begin{equation}\label{viceversa}
\left\{\begin{aligned}
\dif s&=||\tilde{\gamma}'(u)||_{\tilde{g}}\dif u,\\
\pm U(s)\dif t&=\omega(u)\,\Bigg[\dif v+\frac{g(X(\gamma(u)),\gamma'(u))}{\omega(u)^2}\dif u\Bigg].
\end{aligned}\right.
\end{equation}
From the first equation of \eqref{viceversa} it results that $u=u(s)$. Then, from the second one, we obtain
\[\left\{\begin{aligned}
\frac{\partial v}{\partial s}&=-u'(s)\,\frac{g(X(\gamma(u(s))),\gamma'(u(s)))}{\omega(u(s))^2}\\
&=-\frac{g(X(\gamma(s)),\gamma'(s))}{\omega(s)^2},\\
\frac{\partial v}{\partial t}&=\pm \frac{U(s)}{\omega(u(s))}=\pm \frac{U(s)}{\omega(s)},
\end{aligned}
\right.\]
where $\omega(s):=\omega(u(s))$ and $\gamma(s):=\gamma(u(s))$.
Therefore,
\[\frac{\partial^2 v}{\partial t\partial s}=0\]
and, hence, there exists a constant $m\neq 0$
such that
\begin{equation*}
\pm \frac{U(s)}{\omega(s)}=\frac{1}{m}.
\end{equation*}
Thus the second equation of system~\eqref{viceversa} becomes
\begin{equation*}\label{d}
\dif v=\frac{\dif t}{m}- \frac{g(X(\gamma(s)),\gamma'(s))}{m^2\,U(s)^2}\dif s.
\end{equation*}
So, we have the following system
$$\left\{\begin{aligned}
&\omega(s)=\pm m\, U(s),\\
&||\tilde{\gamma}'(s)||_{\tilde{g}}=1,
\end{aligned}
\right.$$
or equivalently
\begin{equation}\label{system}
\left\{\begin{aligned}
&\omega(\xi_1(s),\xi_2(s))=\pm m\, U(s),\\
&h^{11} (s)\,\xi_1'(s)^2+2h^{12} (s)\,\xi_1'(s) \xi_2'(s)+h^{22}(s) \,\xi_2'(s)^2=1,
\end{aligned}\right.
\end{equation}
where $h^{ij}(s)=h^{ij}(\xi_1(s), \xi_2(s))$ and $\tilde{\gamma}(s)=(\xi_1(s), \xi_2(s))$. Therefore, given a positive function $U(s)$ and an arbitrary $m\neq 0$, the $G_X$-invariant surface is locally parametrized by
\[
\psi(s,t)=\phi_{v(s,t)}(\gamma(s)),
\]
where
\[
v(s,t)=\frac{1}{m}\int \dif t-\int \frac{g(X(\gamma(s)),\gamma'(s))}{m^2\,U(s)^2}\dif s
\]
and $\gamma$ is a lift of the profile curve $\tilde{\gamma}$.  By construction, the induced metric is given by $g_\psi=\dif s^2+U(s)^2\dif t^2$, i.e., the $G_X$-invariant surface is isometric to $M$.
\end{proof}

\begin{remark}
The Bour's family constructed in the above theorem contains the surface we started with for $m=1$.
\end{remark}

\section{An integration procedure}\label{sec:integration-procedure}

In this section, we explain how to integrate the system \eqref{system} to obtain the one-parameter family of invariant surfaces of Theorem~\ref{Generalized-Bour}. As a consequence, we show that these surfaces do not depend of the lift of $\tilde{\gamma}$, and thus, we are able to describe entirely any invariant surface of this family in terms of the parameter $m$ and the metric $\dif \sigma^2$.

We begin by introducing local coordinates that are adapted to the action of the Killing vector $X$. For this, let $p\in N_r$ such that $X(p)\neq 0$. Then $X\neq 0$ in a neighborhood of $p$ and, by Frobenius theorem, we may choose a smooth local coordinates $(x_1,x_2,x_3)$ in a neighborhood $V\subset N_r$ of $p$, such that $X=\frac{\partial}{\partial x_3}$. Let $\{\dif x_1,\dif x_2,\dif x_3\}$ denote the dual basis of the coordinate fields $\{\frac{\partial}{\partial x_1},\frac{\partial}{\partial x_2},\frac{\partial}{\partial x_3}\}$ and $g_{ij} = g(\frac{\partial}{\partial x_i},\frac{\partial}{\partial x_j})$ be the coefficients of the Riemannian metric $g$ in these coordinates, then on $V$ we have
\[
g = \sum_{i,j=1}^{3}g_{ij} \dif x_i\otimes \dif x_j \ \ \text{and} \ \ X(g_{ij})=0,
\]
since $X$ is a Killing vector field; therefore $g_{ij}$ are independent of $x_3$, for all $i,j$. We empathize that a $X$-invariant function $\theta$ on $V$ is characterized by $\frac{\partial \theta}{\partial x_3}=0$, thus for simplicity we write $\theta = \theta(x_1,x_2)$. Moreover, denoting by $\phi_v,\;v\in(-\epsilon,\epsilon)$, the local flow of the Killing vector field $X$ on $V$, the orbits of $X$ have the form
\[
v\mapsto (x_1,x_2,x_3+v), \ \text{for all} \ (x_1,x_2,x_3)\in V,
\]
and therefore the local flow of $X$ is just a translation in the $x_3$-direction. Henceforth, we refer to $(x_1,x_2,x_3)$ as \emph{$X$-adapted local coordinates} on $N$.

By \cite[Theorem~2.17]{Olver-93}, the coordinates on $V/G_X$ are provided by the complete set of $X$-invariant functions $\{\xi_1(x_1,x_2) = x_1,\xi_2(x_1,x_2) = x_2\}$, and in this sense, the Riemannian submersion $\pi:V\to V/G_X$ is nothing more than $\pi(x_1,x_2,x_3) = (x_1,x_2)$, if we identify $\xi_i$ with the coordinate function $x_i$ on $V$. In addition, we have that $g(\nabla \xi_i,\nabla \xi_j)=g^{ij}$ and then, by \cite[Proposition~1]{Hsiang-85}, the quotient metric is given by
\[
\widetilde{g} = \frac{1}{g^{11}g^{22}-(g^{12})^2}
\left(g^{22}\dif x_1^2-2g^{12}\dif x_1\otimes\dif x_2+g^{11}\dif x_2^2\right),\]
up to identification of $\xi_i$ with $x_i$.

Firstly, suppose that the Killing vector field $X$ has constant length, so up to scaling we can suppose that $\omega=1$. In this case the orbits of $X$ are geodesics of $(N,g)$ and the $G_X$-invariant surfaces are foliated by geodesics of the ambient space, i.e. they are ruled surfaces.  From the above, given a flat $G_X$-invariant surface $M$, it results that $U(s) = 1$ and $m=1$, since the Bour's family must contain this surface. Therefore, given a profile curve $\tilde{\gamma}(s) = (x_1(s),x_2(s))$, parametrized by arc length in the orbit space, and a lift $\gamma(s)=(x_1(s),x_2(s),x_3(s))$, we have that
\[
\psi(s,t)=\left(x_1(s),x_2(s),x_3 (s)+v(s,t)\right),
\]
with
\[
x_3(s)+v(s,t)=\int \dif t- \sum_{i=1}^{2}\int x_i'(s) \,g_{i3}(x_1(s),x_2(s))\dif s,\]
whose the induced metric is given by $\dif s^2+\dif t^2$. Moreover, we point out that $x_3(s)+v(s,t)$ does not depend of the lift of $\tilde{\gamma}$. In this case, the Bour's family consists of only one surface.

In the case that the Killing vector field $X$ does not have constant length, i.e., when the volume function $\omega$ is not a constant function, we argue in local coordinates to construct orthogonal coordinates on $N/G_X$, in which the volume function $\omega$ is one of them.

\subsection{Local orthogonal coordinates on the orbit space}

Suppose that $\omega$ is not a constant function and let $p\in N_r$ such that $X(p)\neq 0$. We may choose $X$-adapted local coordinates $(x_1,x_2,x_3)$ in a neighborhood $V\subset N_r$ of $p$, such that $\|\nabla\omega\|_g \neq 0$. The volume function, that can be written as $\omega = \sqrt{g_{33}}$, is a $X$-invariant function and thus we compute
\[
\nabla \omega = \sum_{i,j=1}^{2}g^{ij}\diffp{\omega}{{x_i}}\diffp{}{{x_j}}-\frac{1}{\omega}\Gamma_{33}^3\diffp{}{{x_3}},
\]
since $X$ is a Killing vector field. By shrinking $V$ if necessary, we can find a non-constant smooth function $\theta$ on $V$ such that $\|\nabla \theta\|_g\neq 0$ and
\begin{equation}\label{system-f}
\left\{\begin{aligned}
&\left(g^{11}\diffp{\omega}{{x_1}}+g^{12}\diffp{\omega}{{x_2}}\right)\diffp{\theta}{{x_1}}
+\left(g^{12}\diffp{\omega}{{x_1}}+g^{22}\diffp{\omega}{{x_2}}\right)\diffp{\theta}{{x_2}}=0, \\
& \diffp{\theta}{{x_3}}=0,
\end{aligned}\right.
\end{equation}
holds, that is, the solution $\theta$ of the first order linear partial differential equations system above is a $X$-invariant function on $V$ satisfying $g(\nabla \omega,\nabla \theta)=0$. We point out that $\theta$ is not unique, however it can be uniquely determined by $g$ and $X$ in the presence of a Cauchy data (see, for example, \cite{Bernstein-50}).

We claim that $(\omega,\theta)$ are local orthogonal coordinates on $V/G_X$. Indeed, let $\Psi:V\to\r^2$ given by $\Psi(x_1,x_2,x_3)=(\omega(x_1,x_2),\theta(x_1,x_2))$. We observe that $\textrm{rank}(\dif\Psi)=2$ at each point of $V$, since $g(\nabla \omega,\nabla \theta)=0$ and
\[
\left(g^{11}g^{22}-(g^{12})^2\right)\left(\diffp{\omega}{{x_1}}\diffp{\theta}{{x_2}}-
\diffp{\omega}{{x_2}}\diffp{\theta}{{x_1}}\right)^2 = \|\nabla\omega\|_g^2\|\nabla\theta\|_g^2\neq 0.
\]
Hence, by \cite[Theorem~2.16]{Olver-93}, $\{\omega,\theta\}$ is a set of functionally independent invariants on a $G_X$-invariant subset of $N_r$. As $\Psi$ is constant on the fibers of $\pi$, there exists a unique smooth map $\tilde{\Psi}:V/G_X\to\mathbb{R}^2$ such that $\Psi = \tilde{\Psi}\circ\pi$ (cf. \cite[Theorem~4.30]{Lee-13}), and thus $\textrm{rank}(\dif \widetilde{\Psi})=2$. Therefore, by the Implicit function theorem, we have that $x_1=x_1(\omega,\theta)$ and $x_2=x_2(\omega,\theta)$, i.e., $(\omega,\theta)$ are local coordinates on $V/G_X$, reducing the neighborhood if necessary. Moreover, since $\|\nabla \omega\|_g^2 = \|\nabla \omega\|_g^2(\omega,\theta)$,  $\|\nabla \theta\|_g^2 = \|\nabla \theta\|_g^2(\omega,\theta)$ and $g(\nabla \omega,\nabla \theta)=0$, again by \cite[Proposition~1]{Hsiang-85}, we get that the quotient metric $\tilde{g}$ is given by
\[
\tilde{g}=\frac{1}{\|\nabla \omega\|_g^2}
\dif \omega^2+\frac{1}{\|\nabla \theta\|_g^2}\dif \theta^2.
\]
In the local coordinates $(\omega,\theta)$, the system \eqref{system} takes the form
\begin{equation*}
\left\{\begin{aligned}
&\omega(s)=\pm m\, U(s),\\
&\frac{1}{\|\nabla \omega\|_g^2(s)}\omega'(s)^2+\frac{1}{\|\nabla \theta\|_g^2(s)}\theta'(s)^2=1,
\end{aligned}\right.
\end{equation*}
where $\|\nabla \omega\|_g^2(s) = \|\nabla \omega\|_g^2(\omega(s),\theta(s))$ and $\|\nabla \theta\|_g^2(s) = \|\nabla \theta\|_g^2(\omega(s),\theta(s))$. Since $U(s)$ is known, by the first equation, $\|\nabla \omega\|_g^2$ and $\|\nabla \theta\|_g^2$ can be seen as functions on $(s,\theta(s))$, and thus we can write the system as
\begin{equation}\label{omega-theta}
\left\{\begin{aligned}
&\omega(s)=\pm m\, U(s),\\
&M(s,\theta(s))+N(s,\theta(s))\theta'(s)=0,
\end{aligned}\right.
\end{equation}
where
\begin{equation*}
\left\{\begin{aligned}
 M(s,\theta(s)) &=  \|\nabla \theta\|_g(\pm m U(s),\theta(s))\sqrt{\|\nabla \omega\|_g^2(\pm m U(s),\theta(s))-m^2 U'(s)^2},\\
 N(s,\theta(s)) &=  \pm\|\nabla \omega\|_g(\pm m U(s),\theta(s)).
\end{aligned}\right.
\end{equation*}
Therefore $\theta(s)$ is a solution of an ordinary differential equation that can be solved by the Euler method. Since the profile curve of the $G_X$-invariant surface is $\tilde{\gamma}(s)=(\omega(s), \theta(s))$ on the coordinates $(\omega,\theta)$, then
\[
\tilde{\gamma}(s)=\widetilde{\Psi}^{-1}\left(\omega(s), \theta(s)\right)=\left(x_1(s),x_2(s)\right),
\]
where $x_1(s) = x_1(\omega(s), \theta(s))$, $x_2(s) = x_2(\omega(s), \theta(s))$ and $(\omega(s),\theta(s))$ satisfies system \eqref{omega-theta}. By taking a lift $\gamma(s)=(x_1(s),x_2(s),x_3(s))$ of $\tilde{\gamma}$, we compute
\[
g(X(\gamma(s)),\gamma'(s)) = \sum_{i=1}^{2}x_i'(s)\, g_{i3}(x_1(s),x_2(s))+x_3'(s)\,\omega(s)^2.
\]
Therefore, on the neighborhood $V$, the one-parameter family of $G_X$-invariant surfaces can be parametrized, in natural parameters, by
\begin{equation}\label{psi(s,t)}
\psi_m(s,t)=\left(x_1(s),x_2(s),x_3 (s)+v(s,t)\right),
\end{equation}
where
\[
x_3(s)+v(s,t)=\frac{1}{m}\int \dif t- \sum_{i=1}^{2}\int \frac{x_i'(s) \,g_{i3}(x_1(s),x_2(s))}{m^2\,U(s)^2}\dif s, \quad m\neq 0,
\]
whose does not depend of the lift of $\tilde{\gamma}$. Furthermore, by construction, the induced metric is given by $g_\psi=\dif s^2+U(s)^2\dif t^2$.

As a consequence of the discussion above, we have the following result:

\begin{taggedtheorem}{B}\label{Generalized-Bour-2}
Let $\{\psi_m(s,t)\}_m$ be the one-parameter family of $G_X$-invariant surfaces of $N$ isometric to $M$ given by Theorem~\ref{Generalized-Bour}. Then each surface of $\{\psi_m(s,t)\}_m$ can be explicitly determined in terms of $m$ and the metric $\dif \sigma^2$.
\end{taggedtheorem}

Therefore, the generalized Bour's theorem for invariant surfaces follows from Theorem~\ref{Generalized-Bour} and Theorem~\ref{Generalized-Bour-2}.

\subsection{Helicoidal surfaces revisited}

In the following, we apply our integration procedure to obtain the Bour's theorem for helicoidal surfaces in $\mathbb{R}^3$, and more generally, in BCV spaces.

\begin{example}[Helicoidal surfaces in $\mathbb{R}^3$]\label{helicoidal-R3}
In \cite{Bour-1863}, Bour presents in Theorem~II, p. 82, a two-parameter family of helicoidal surfaces isometric to a given helicoidal surface in $\r^3$ generated by a graph $z = \lambda(x)$ in the $xz$-plane, where $(x,y,z)$ denotes the cartesian coordinates on $\mathbb{R}^3$. Afterwards, do Carmo and Dajczer described all helicoidal surfaces of constant mean curvature in $\r^3$ by using the Bour's result. In order to prove the Bour's theorem for helicoidal surfaces in $\mathbb{R}^3$, let
\[
X=y\frac{\partial}{\partial x}-x\frac{\partial}{\partial y}+a\,\frac{\partial}{\partial z}, \ \text{with} \ a\in\r,
\]
be the infinitesimal generator of $G_X = \{\phi_v\}$, $v\in\mathbb{R}$, the helicoidal isometries subgroup of $\mathbb{R}^3$ with $z$-axis and pitch $a$, where
\[
\phi_v(x,y,z) = \left( x\cos v+y\sin v,y\cos v-x\sin v,z+a\,v \right).
\]
We assume that $a\neq 0$ and then the Killing vector field $X$ does not vanish on $\mathbb{R}^3$. Consider the following change of coordinates for the $X$-adapted coordinates $(x_1,x_2,x_3)$:
\begin{equation*}
\left\{\begin{aligned}
& x = x_{1} \cos x_{3} + x_{2} \sin x_{3},\\
& y = x_{2} \cos x_{3} - x_{1} \sin x_{3},\\
& z = a\, x_{3}.
\end{aligned}\right.
\end{equation*}
In these coordinates, the Euclidean metric $g$ is given by
\begin{equation*}
g = \dif  x_{1}^2
+ \dif  x_{2}^2+(x_1^2+x_2^2+a^2)\dif x_3^2
+2\left(x_2\dif x_1-x_1\dif x_2\right)\otimes\dif x_3,
\end{equation*}
the Killing vector field $X$ takes the form $X=\frac{\partial}{\partial x_3}$ and the helicoidal isometries
subgroup $G_X$ is
\[
G_X=\{\phi_v : \mathbb{R}^3\to\mathbb{R}^3 : \phi_v(x_1,x_2,x_3) = (x_1,x_2,x_3+v)\}.
\]
Let $V=\{(x_1,x_2,x_3)\in\mathbb{R}^3 : x_1>0\}.$ The coordinates on $V/G_X$ are provided by the complete set of $X$-invariant functions $\{\xi_1(x_1,x_2)=x_1,\xi_2(x_1,x_2)=x_2\}$, the Riemannian submersion is $\pi(x_1,x_2,x_3)=(x_1,x_2)$, and the quotient metric is
\[\tilde{g} = \frac{1}{x_1^2+x_2^2+a^2}\left((x_1^2+a^2)\dif x_1^2+2x_1 x_2 \dif x_1\otimes\dif x_2+(x_2^2+a^2)\dif x_2^2\right),\]
up to identification of $\xi_i$ with $x_i$.

The volume function of the principal orbit is $\omega(x_1,x_2) = \sqrt{x_1^2+x_2^2+a^2}$ and, by a direct computation, we can see that $\theta(x_1,x_2)=x_2/x_1$ is a solution of \eqref{system-f}. Thus $\tilde{\Psi}:V/G_X\to\mathbb{R}^2$, given by $\tilde{\Psi}(x_1,x_2)=(\omega(x_1,x_2),\theta(x_1,x_2))$, has rank 2, and consequently $x_1$ and $x_2$ are determined in terms of $(\omega,\theta)$ by
\begin{equation*}
\left\{\begin{aligned}
&x_1(\omega,\theta) =\sqrt{\frac{\omega^2-a^2}{1+\theta^2}},\\
&x_2(\omega,\theta) = \theta \sqrt{\frac{\omega^2-a^2}{1+\theta^2}},
\end{aligned}\right.
\end{equation*}
and with respect to the coordinates $(\omega,\theta)$, the quotient metric $\tilde{g}$ takes the form
\[
\tilde{g}=\frac{\omega^2}{\omega^2-a^2}
\dif \omega^2+\frac{a^2(\omega^2-a^2)}{\omega^2(1+\theta^2)^2}\dif \theta^2.
\]
By considering the system \eqref{system}, we have that $\omega(s)=\pm m U(s)$ and $\theta(s)$ is solution of
\[
\frac{\theta'(s)}{1+\theta(s)^2}=
\pm\frac{m\,U(s)\sqrt{m^2U(s)^2\,(1-m^2U'(s)^2)-a^2}}{a\,(m^2U(s)^2-a^2)}
\]
that is, $\theta(s) = \tan\left(\lambda(s)/a\right)$, with
\[
\lambda(s) = \pm\int \frac{m\,U(s)\sqrt{m^2U(s)^2(1-m^2U'(s)^2)-a^2}}{m^2U(s)^2-a^2}\dif s.
\]
Therefore, any profile curve $\widetilde{\gamma}$ of a helicoidal surface in $\mathbb{R}^3$ can be locally parametrized, with respect to the coordinates $(x_1,x_2)$ on $V/G_X$, by
\[
\widetilde{\gamma}(s) = \left(\sqrt{m^2U(s)^2-a^2}\cos\frac{\lambda(s)}{a}, \sqrt{m^2U(s)^2-a^2} \sin \frac{\lambda(s)}{a}\right).
\]
Moreover, if
$\gamma(s) = (x_1(s),x_2(s),x_3(s))$ is a lift of $\widetilde{\gamma}$,
we get that
\begin{align*}
x_3(s)+v(s,t)&= \frac{1}{m}\int\dif t\pm\int
\frac{\sqrt{m^2U(s)^2(1-m^2U'(s)^2)-a^2}}{a\, m\, U(s)}\dif s,
\end{align*}
consequently, \eqref{psi(s,t)} implies that any helicoidal surface in $\mathbb{R}^3$ can be locally parametrized in natural parameters, with respect to the $X$-adapted coordinates, by
\begin{equation*}
\psi(s,t) =\left(\sqrt{m^2U(s)^2-a^2}\cos\tfrac{\lambda(s)}{a}, \sqrt{m^2U(s)^2-a^2}\sin\tfrac{\lambda(s)}{a}, x_3(s)+v(s,t)\right).
\end{equation*}
In particular, we remark that, after changing for the cartesian coordinates $(x,y,z)$, for $x_3(s) = \arctan\theta(s)$ we have that
\begin{equation*}
\psi(s,t) = \left(\rho(s)\cos v(s,t),-\rho(s)\sin v(s,t),\lambda(s)+a\,v(s,t)\right),
\end{equation*}
where
\begin{equation}\label{system-1}
\left\{\begin{aligned}
\rho(s) &=\sqrt{m^2U(s)^2-a^2},\\
\lambda(s) &= \varepsilon\int \frac{m\,U(s)\sqrt{m^2U(s)^2(1-m^2U'(s)^2)-a^2}}{m^2U(s)^2-a^2}\dif s,\\
v(s,t) &=\frac{1}{m}\int \dif t -\varepsilon\int \frac{a\sqrt{m^2\,U(s)^2(1-m^2U'(s)^2)-a^2}}{m\,U(s)(m^2U(s)^2-a^2)}\dif s,
\end{aligned}\right.
\end{equation}
and $\varepsilon = \pm 1$, which for $\varepsilon = 1$ corresponds the Bour's theorem for helicoidal surfaces in $\mathbb{R}^3$  presented in \cite[Theorem~II, p. 82]{Bour-1863} and \cite[Lemma~2.3]{doCarmo-Dajczer-82}.
\end{example}

\begin{example}[Helicoidal surfaces in BCV spaces]\label{helicoidal-BCV} In \cite{Caddeo-Onnis-Piu-2022}, the second and the third authors studied helicoidal surfaces in Bianchi-Cartan-Vranceanu (BCV) spaces. These spaces consist of all three-dimensional simply-connected homogeneous manifolds whose group of isometries has dimension $4$ or $6$, except for those of constant negative sectional curvature (for the classification, see \cite{Bianchi-1897,Bianchi-1928,Cartan-1951,Piu-88,Vranceanu-57}). For sake of clarity, maintaining the notations used in \cite{Caddeo-Onnis-Piu-2022}, the BCV spaces can be represented in a concise form by the following two-parameter family of metrics
\begin{equation*}\label{1.1}
g_{\kappa,\tau} = \frac{\dif r^2}{B^2}
+r^2\left(\frac{1+\tau^2 r^2}{B^2}\right)\dif\vartheta^2+\dif z^2 - 2\frac{\tau r^2}{B}\dif\vartheta\otimes \dif z\,,
\end{equation*}
for $\kappa, \tau \in \r$ and  $B=1+\frac{\kappa}{4}r^2$  positive, where $(r,\vartheta,z)$ are cylindrical coordinates on $\mathbb{R}^3$. In the following we denote by $\N_{\kappa,\tau}$ the open subset of $\r^3$ where the metrics $g_{\kappa,\tau}$ are defined.
In a similar way to the previous example, we obtain the Bour's theorem for helicoidal surfaces in $\N_{\kappa,\tau}$, omitting long computations that appear in this case. Let
\[X=\frac{\partial}{\partial \vartheta}+a\,\frac{\partial}{\partial z}, \ \text{with} \ a\in\r,\]
be the infinitesimal generator of $G_X = \{\phi_v\}$, $v\in\mathbb{R}$, the helicoidal isometries subgroup of $\N_{\kappa,\tau}$ with $z$-axis and pitch $a$, where
\[
\phi_v(r,\vartheta,z) = \left( r, \vartheta+v,z+a\,v\right).
\]
We assume that $a\neq 0$, and we consider the following change of coordinates for the $X$-adapted coordinates $(x_1,x_2,x_3)$:
\begin{equation*}
\left\{\begin{aligned}
r &= \sqrt{x_1^2+ x_2^2},\\
\vartheta &= x_3+\arctan \frac{x_2}{x_1},\\
z &= a\, x_3.
\end{aligned}\right.
\end{equation*}
In these coordinates, the family of metrics $g_{\kappa,\tau}$ is given by
\begin{multline*}
g_{\kappa,\tau} = \frac{1+{\tau}^{2} x_{2}^{2}}{B^2} \dif  x_{1}^2
+ \frac{1+{\tau}^{2} x_{1}^{2}}{B^2}\, \dif  x_{2}^2+\frac{C^2+x_1^2+x_2^2}{B^2}\,\dif x_3^2\\
- \frac{2 x_1 x_2 \tau^2}{B^2}\,\dif x_1\otimes\dif x_2
+\frac{2(\tau C-1)}{B^2}\left(x_2\dif x_1-x_1\dif x_2\right)\otimes\dif x_3,
\end{multline*}
where $B=1+\frac{\kappa}{4}(x_1^2+x_2^2)$ is positive on $V$, and $C = a B - (x_1^2+x_2^2)\tau$, for some appropriate neighborhood $V\subset\N_{\kappa,\tau}$. Moreover, the Killing vector field $X$ takes the form
$X = \frac{\partial}{\partial x_3}$. Since $\omega(x_1,x_2) = \sqrt{C^2+x_1^2+x_2^2}/B$ is a volume function of the principal orbit, in an analogous way, we can see that $\theta(x_1,x_2)=x_2/x_1$ is a solution of \eqref{system-f} and that $x_1$ and $x_2$ are determined in terms of $(\omega,\theta)$ by
\begin{equation*}
\left\{\begin{aligned}
&x_1^2+x_2^2 = \frac{4(\omega^2-a^2)}{(1+\sqrt{\Delta})^2-4\tau^2\omega^2} ,\\
&x_2 = x_1 \theta,
\end{aligned}\right.
\end{equation*}
where $\Delta(\omega) = (1-2a\tau)^2+(4\tau^2-\kappa)(\omega^2-a^2)$.
With respect to the coordinates $(\omega,\theta)$, the quotient metric $\tilde{g}$ takes the form
\begin{multline*}
\tilde{g}=\frac{\omega^2(1-2a\tau+\sqrt{\Delta})^2}{\Delta(\omega^2-a^2)
((1+\sqrt{\Delta})^2-4\tau^2\omega^2)}
\dif\omega^2\\ +\frac{a^2(\omega^2-a^2)((1+\sqrt{\Delta})^2-4\tau^2\omega^2)}{\omega^2(1+\theta^2)^2(1-2a\tau+\sqrt{\Delta})^2
}\dif \theta^2.
\end{multline*}
By considering the system \eqref{system}, we have that $\omega(s)=\pm m U(s)$ and $\theta(s)$ is solution of
\[
\frac{\theta'}{1+\theta^2}=
\pm\frac{mU(4+\kappa \rho^2)}{4a\rho^2}\sqrt{\rho^2-\frac{ m^4 U^2 U'^2\,(4+\kappa \rho^2)^2}{16\Delta}}
\]
that is, $\theta(s) = \tan\left(\lambda(s)/a\right)$, with
\[
\lambda(s) = \pm\int \frac{m\,U(4+\kappa \rho^2)}{4\rho^2}\sqrt{\rho^2-\frac{ m^4 U^2 U'^2\, (4+\kappa \rho^2)^2}{16\Delta}}\dif s,
\]
where $$\rho(s) = \sqrt{x_1^2\,( m U(s),\theta(s))+x_2^2\,( m U(s),\theta(s))},\qquad \Delta(s)=\Delta( m U(s)).$$
Therefore, any profile curve $\widetilde{\gamma}$ of a helicoidal surface in $\N_{\kappa,\tau}$ can be locally parametrized, with respect to the coordinates $(x_1,x_2)$ on $V/G_X$, by
\[
\widetilde{\gamma}(s) = \left(\rho(s)\,\cos\frac{\lambda(s)}{a}, \rho(s)\,\sin \frac{\lambda(s)}{a}\right).
\]
Moreover, if
$\gamma(s) = (x_1(s),x_2(s),x_3(s))$ is a lift of $\widetilde{\gamma}$,
we get that
\begin{multline*}
x_3(s)+v(s,t)= \frac{1}{m}\int\dif t\\
\pm\int \frac{\tau(a\kappa-4\tau)\rho^2+4(a\tau-1)}{amU(4+\kappa \rho^2)}
\sqrt{\rho^2-\frac{ m^4 U^2 U'^2 (4+\kappa \rho^2)^2}{16\Delta}}\dif s,
\end{multline*}
consequently, \eqref{psi(s,t)} implies that any helicoidal surface in $\N_{\kappa,\tau}$ can be locally parametrized in natural parameters, with respect to the $X$-adapted coordinates, by
\begin{equation*}
\psi(s,t) =\left(\rho(s)\cos\frac{\lambda(s)}{a}, \rho(s)\sin\frac{\lambda(s)}{a}, x_3(s)+v(s,t)\right).
\end{equation*}
In particular, we remark that, after changing for the cylindrical coordinates $(r,\vartheta,z)$, for $x_3(s) = -\arctan\theta(s)$ we have that
\begin{equation*}
\psi(s,t) = \left(\rho(s),v(s,t),-\lambda(s)+a\,v(s,t)\right),
\end{equation*}
where
\begin{equation}\label{system-2}
\left\{\begin{aligned}
\rho(s) &=2\sqrt{\frac{m\,U^2-a^2}{(1+\sqrt{\Delta})^2-4\tau^2m^2U^2} }, \\
\lambda(s) &= \varepsilon\int \frac{mU(4+\kappa \rho^2)}{4\rho^2}\sqrt{\rho^2-\frac{m^4 U^2 U'^2(4+\kappa \rho^2)^2}{16\Delta}}\dif s,\\
v(s,t) &=\frac{1}{m}\int \dif t-\varepsilon\int \frac{(4\tau-a\kappa)\rho^2-4a}{4mU \rho^2}
\sqrt{\rho^2-\frac{m^4 U^2 U'^2(4+\kappa \rho^2)^2 }{16\Delta}}\dif s,
\end{aligned}\right.
\end{equation}
$\Delta(s) = (1-2a\tau)^2+(4\tau^2-\kappa)(m^2U(s)^2-a^2)$ and $\varepsilon = \pm 1$, which for $\varepsilon=-1$ corresponds the Bour's theorem for helicoidal surfaces in the BCV spaces presented in \cite[Theorem~2]{Caddeo-Onnis-Piu-2022}.
\end{example}

\begin{remark} We note that the formulas \eqref{system-1} and \eqref{system-2} also hold when $a=0$, and in these cases we obtain a rotational surface that is isometric to a given helicoidal surface. This fact generalizes the well-known result that the helicoid and the catenoid belong to a one-parameter family of isometric surfaces in $\mathbb{R}^3$. We point out that when $\tau=\kappa=0$, the helicoidal motion described in Example~\ref{helicoidal-BCV} is in the opposite direction to the one described in Example~\ref{helicoidal-R3}, therefore some signs change in the formulas of $\lambda(s)$ and $v(s,t)$ when we compare \eqref{system-1} and \eqref{system-2}.
\end{remark}

\begin{remark} In the given examples, we prescribed $x_3(s)$ in order to obtain the identical expressions as presented in \cite{Bour-1863,doCarmo-Dajczer-82} and \cite{Caddeo-Onnis-Piu-2022}, since in these works the helicoidal surfaces were parametrized by $\psi(u,v)=\phi_v(\rho(u),0,\varepsilon\,\lambda(u))$.
\end{remark}

\bibliographystyle{amsplain}
\footnotesize{
		\bibliography{references}}
\end{document}